\newtheorem{theorem}{Theorem}[section]
\newtheorem{proposition}[theorem]{Proposition}
 \renewcommand{\(}{\left(}
\renewcommand{\)}{\right)}
\newcommand{\eps}{\epsilon}
\newcommand{\rr}{ \mathbb{R}}
\begin{document}
\title[The Ljapunov-Schmidt reduction for some critical problems]{The Ljapunov-Schmidt reduction for some critical problems}

\author{Angela Pistoia}
\address{Angela Pistoia, Dipartimento di Metodi e Modelli Matematici,
Universit\`a di Roma ``La Sapienza'', via Antonio Scarpa 16, 00161 Roma,
Italy}
\email{pistoia@dmmm.uniroma1.it}
\maketitle

 \section{Introduction}

Let us consider the problem
\begin{equation} \label{eq1}
\left\{
\begin{aligned}
&-\Delta u=|u|^{q-1 }u & \hbox{in}\ \Omega  ,\\
&u=0 & \hbox{on}\ \partial\Omega, \\
\end{aligned}
 \right.
\end{equation}
where $\Omega$ is a smooth bounded   domain in $\rr^n,$ $n\ge3$ and $q>1.$ Let $2^*$ denote  the critical exponent in the Sobolev embeddings, i.e. $2^*=\frac{2n}{n-2}$.

In the subcritical case, i.e.  $q<2^*-1$  compactness of Sobolev's embedding ensures existence of at least one  positive solution  and infinitely many sign changing solutions to \eqref{eq1}.

In the  critical case or in the supercritical case, i.e.  $q\ge2^*-1$ existence of solutions is a delicate issue.
In \cite{poho} Poho\u{z}aev  proved that  the problem \eqref{eq1}
 does not admit a nontrivial
solution if $\Omega$ is star-shaped. On the other hand,  Kazdan and Warner in \cite{kaz} proved that problem \eqref{eq1} has
one positive radial solution and infinitely many sign changing radial solutions if  $\Omega$ is an annulus.
In the critical case, i.e. $q=2^*-1$, Bahri and Coron in  \cite{baco} found a positive solution to  \eqref{eq1}   provided
  the domain $\Omega$ has a \textit{nontrivial topology}.
\medskip

In this survey we are in particular interested in the following  perturbed critical problems.

\begin{itemize}
\item[$\bullet$] {\em The Brezis-Nirenberg  problem }
 $$  (\mathcal{BN})_\eps\qquad\qquad\left\{
\begin{aligned}
&-\Delta u=|u|^{2^*-2}u+\epsilon u & \hbox{in}\ \Omega\subset\rr^n,\ n\ge4,\\
&u=0 & \hbox{on}\ \partial\Omega\\
\end{aligned}
 \right.
 $$
\item[$\bullet$] {\em The ''almost-critical" problem }
  $$ (\mathcal{AC})_\eps\qquad\qquad\left\{
\begin{aligned}
&-\Delta u=|u|^{2^*-2-\epsilon}u & \hbox{in}\ \Omega,\\
&u=0 & \hbox{on}\ \partial\Omega\\
\end{aligned}
 \right.
 $$
 \item[$\bullet$] {\em The Coron's problem }
$$ (\mathcal{C})_\eps\qquad\qquad \left\{
\begin{aligned}
&-\Delta u=|u|^{2^*-2 }u & \hbox{in}\ \Omega _\eps:=\Omega\setminus B(z_0,\eps),\\
&u=0 & \hbox{on}\ \partial\Omega_\eps\\
\end{aligned}
 \right.
 $$
\end{itemize}
   In the first two problems  $\eps\in\rr$ is a small parameter either positive or negative. In the last problem $z_0\in \Omega $ and $\eps$ is a small positive parameter.

   The common feature of those problems is that when $\eps$ is small enough they can have    solutions $u_\eps$ whose shape  resembles the sum of a finite number of bubbles as $\eps$ goes to zero, i.e.
   $$u_\eps(x)\sim \sum\limits_{i=1}^k\lambda_iU_{\delta_i^\eps ,z_i ^\eps}(x)$$
   where the concentration points $z_i^\eps$ converge to  a point $z_i^0$ in  $\Omega$ and the concentration parameters $\delta_i^\eps$ go to zero as $\eps$ go to zero.
If  $\lambda_i=+1$ ($\lambda_i=-1$) we say that $u_\eps$ has a positive (negative) blow-up point at  $z_i^0$ as $\eps$ goes to zero.  

  A {\em bubble} is a function
\begin{equation}\label{udz}
U_{\delta ,z }(x):=\alpha_n {\frac{\delta ^{\frac{%
n-2}{2}}}{(\delta ^{2}+|x-z |^{2})^{\frac{n-2}{2}}},}\qquad \delta >0,%
\text{\quad }x,z \in \mathbb{R}^{n}.
\end{equation}%
Here $\alpha _{n}:=\left[ n(n-2)\right] ^{\frac{n-2}{4}}.$ They are  positive solutions to the limit problem (see Aubin \cite{au}, Caffarelli-Gidas-Spruck \cite{cgs}, Talenti \cite{tal})
\begin{equation}\label{lp}
-\Delta u=u^{n+2\over n-2} \quad \hbox{in}\ \mathbb{R}^{n} .
\end{equation}%

  The location of the points $z_i$'s where blowing-up occurs is strictly related to the geometry of the domain,
  namely Green's and Robin's functions.
Let   $G$ the Green's function of the negative laplacian on
$\Omega$ with Dirichlet boundary conditions
and let $H $ its regular part, i.e.
$$
H (x,y)=\frac{c_n}{|x-y|^{n-2}} -G (x,y),\quad \forall (x,y)\in
\Omega^2,
$$
where $c_n$ is a positive constant.
The function $\tau(x):= H (x,x),$ $x\in\Omega$ is called
  {\em Robin's function.}  It is known that  $\tau$ is a $C^2-$function and also that $\tau(x)$ goes to $+\infty$ as $x$ approaches the boundary of $\Omega.$ Therefore, the Robin's function has always a minimum point in $\Omega.$

\medskip
Let us state the main results concerning solutions to  $ (\mathcal{BN})_\eps,$  $ (\mathcal{AC})_\eps$ and  $ (\mathcal{C})_\eps$ which blow-up at one or more points of the domain as the parameter $\eps$ goes to zero.
\bigskip

  {\em The Brezis-Nirenberg  problem   and  the ''almost critical" problem when $\eps$ is positive}

  Brezis and Nirenberg in \cite{bn} proved that if $n\ge4 $  for
  small enough problem $ (\mathcal{BN})_\eps$ has a   positive solution  provided $\eps$ is small enough.
On the other hand, it is clear that the slightly sub-critical problem  $ (\mathcal{AC})_\eps$ has always a positive solution.
Han in \cite{h}  proved that these solutions blow-up at a critical point of the Robin's function as $\eps$ goes to zero.
Conversely, Rey in \cite{rey1,rey2} proved that any $C^1-$stable critical point $z_0$ of the Robin's function generates a family of solutions which blows-up at $z_0$  as $\eps$ goes to zero.
Musso-Pistoia in \cite{mupi4} and Bahri-Li-Rey in \cite{blr} studied existence of solutions which blow-up at $\kappa$ different points of $\Omega.$
Grossi-Takahashi \cite{grotak} proved the nonexistence of positive
solutions  blowing up at $\kappa\ge2$   points for these problems in convex domains.

As far as it concerns the existence of sign changing solutions, the slightly sub-critical problem  $ (\mathcal{AC})_\eps$ has infinitely many sign changing solutions. Existence of sign changing solution for problem $ (\mathcal{BN})_\eps$ is a more difficult problem.
The first result about problem $ (\mathcal{BN})_\eps$ is due to Cerami-Solimini-Struwe, who showed in \cite{css} the existence of a pair of
least energy  sign changing solutions  if $n\ge6$ and $\eps$ is small enough.
The existence of infinitely many  solutions to $ (\mathcal{BN})_\eps$    for any
$\eps>0$ was established by Devillanova-Solimini in \cite{ds1} when $n\ge7.$  Moreover, for low dimensions 
$n =4,5,6,$ in
\cite{ds2} they proved the existence of at least $n+1$ pairs of solutions   provided
$\eps$ is small enough.
Ben Ayed-El Mehdi-Pacella in \cite{bep1,bep2}  studied   the blow up of the low energy sign-changing solutions of problems $ (\mathcal{AC})_\eps$ and $ (\mathcal{BN})_\eps$ as $\eps$ goes to zero
and they   classified these solutions according
to the concentration speeds of the positive and negative part.
In \cite{cc} Castro-Clapp  proved the existence of one pair of solutions in a symmetric domain   which change sign exactly once, provided  
$n\ge4$ and $\eps$ is small enough. Moreover they describe the profile of the solutions, by showing that the
solutions blow-up positively and negatively at two different points in $\Omega$ as $\eps$ goes to 0.
Micheletti-Pistoia in \cite{mipi1} and Bartsch-Micheletti-Pistoia in \cite{bamipi} generalized such a result showing the existence of at least $n$ pairs of sign changing solutions with one negative and one positive blow-up points.
Pistoia-Weth in \cite{piwe} and Musso-Pistoia in \cite{mupi5} proved that a large number of sign changing solutions exists for problem $ (\mathcal{AC})_\eps$: the solutions are a superposition with alternating sign of bubbles whose centers collapse to the minimum point of the Robin's function as $\eps$ goes to zero.
This result is unknown for problem  $ (\mathcal{BN})_\eps$ even if we think to be true.
\medskip

  {\em The Brezis-Nirenberg  problem   and  the ''almost critical" problem when $\eps$ is negative}

In \cite{poho} Poho\u{z}aev  proved that problems $ (\mathcal{B})_\eps$ and $ (\mathcal{AC})_\eps$ do not have any solutions if $\eps$ is negative and $\Omega$ is starshaped.

As far as it concerns the existence of blowing-up solutions, when $\eps$ is negative  and small enough completely different phenomena take place even if the domain $\Omega$ is not starshaped. 
Indeed, Ben Ayed-El Mehdi-Grossi-Rey in   \cite{begr} proved that  problem $ (\mathcal{AC})_\eps$ do not have any positive solutions which blows-up at one point when $\eps$ goes to zero. We believe that their argument could also be extended  to
the problem   $ (\mathcal{BN})_\eps.$
  Del Pino-Felmer-Musso in \cite{delfemu3} and Musso-Pistoia in \cite{mupi1} found,  for $\eps$   small enough, a positive solutions with two positive blow-up points provided
the domain $\Omega$ has a hole.
Del Pino-Felmer-Musso in \cite{delfemu1,delfemu2} and Pistoia-Rey in \cite{pire} found solutions with three or more positive blow-up points, under suitable assumptions on the domain $\Omega .$  Towers of positive bubbles were  constructed by del Pino-Dolbeault-Musso in \cite{ddm,ddm1} and by Ge-Jing-Pacard in \cite{gjp}, under suitable assumptions on non degeneracy of Robin's    and   Green's functions.
 As far as it concerns the study of sign changing solutions, Ben Ayed-Bouh in \cite{bb} proved that problem      $ (\mathcal{AC})_\eps,$
does not have any sign changing solutions with one  positive and one  or two negative blow-up points. We believe that their argument could also be extended also to
the problem   $ (\mathcal{BN})_\eps.$
 There are no results about existence of sign changing solutions for these problems.

  \medskip

    {\em The Coron's problem }
 Coron in \cite{c}
found via variational methods a solution to problem  $ (\mathcal{C})_\eps $ provided $\eps$ is small enough.
 If the domain
 has several holes,  Rey in \cite{rey3} and   Li-Yan-Yang in \cite{lyy} constructed solutions blowing-up at the centers of the holes
 as the size of the holes goes to zero.   On the other hand, Clapp-Weth
in \cite{cw}  found a second solution to $ (\mathcal{C})_\eps $, but they were unable to say if it 
 was positive or changed sign.
  Clapp-Musso-Pistoia in \cite{cmp} found positive and sign changing solutions to $ (\mathcal{C})_\eps $
blowing-up at the center of the hole and at  one or more points inside the domain as $\eps$ goes to zero.
If the domain has two small holes, Musso-Pistoia in \cite{mupi3} constructed a sign changing solution
  with one positive blow-up point and one negative blow-up point at the centers of the two holes.
Musso-Pistoia in \cite{mupi2} and Ge-Musso-Pistoia in \cite{gmp} found a large  number of sign changing solutions to  $ (\mathcal{C})_\eps $:
the solutions are a superposition of bubbles with alternating sign whose centers collapse to the center of the hole as $\eps$ goes to zero.

\medskip
 The proofs of all the results concerning existence of solutions which blow-up positively or negatively at one or more points as the parameter $\eps$ goes to zero,
rely  on a Lyapunov-Schmidt reduction scheme firstly developed by Bahri-Coron in \cite{baco}.
This allows to reduce
the problem of finding  blowing-up solutions  to the problem of finding
critical points of a functional which depends only on the blow-up points  and the concentration rates.
 The leading part of the reduced functional is explicitly given in terms of the geometry of the domain, namely  
Green's and Robin's functions. The reduced functional also takes into account
the different interactions among the bubbles  which depends on their respective sign.  Finally, we use a variational approach and we
obtain the existence of  critical  points of the reduced functional by applying a minimization argument or a min-max argument.
In the following we describe the main steps to get   some of the previous results.
We will refer to
  \cite{mupi4} and  \cite{blr,delfemu3,mipi1} for the proofs related to the construction of positive and sign-changing  multi-bubbles   to problems $(\mathcal{BN})_\eps$ and $ (\mathcal{AC})_\eps$, respectively. We will refer to \cite{mupi5} and  to  \cite{mupi2,gmp}
for the proofs related to the construction of towers of bubbles    to problems $ (\mathcal{AC})_\eps$ and $ (\mathcal{C})_\eps$, respectively.

\section{Setting of the problem}

We want to rewrite problems  $(\mathcal{BN})_\eps$, $(\mathcal{AC})_\eps$ and $(\mathcal{C})_\eps$ in a different, but equivalent, form.

\medskip
Let us take
\begin{equation*}
(u,v):=\int_{\Omega } \nabla u\cdot \nabla v\text{ }dx,\text{\qquad }%
\Vert u\Vert :=\left( \int_{\Omega } \left\vert \nabla u\right\vert
^{2}dx\right) ^{1/2},
\end{equation*}%
as the inner product in $\mathrm{H}_{0}^{1}(\Omega )$ and its corresponding
norm.   Similarly, for each $r\in
\lbrack 1,\infty )$,%
\begin{equation*}
\Vert u\Vert _{r}:=\left( \int_{\Omega } \left\vert u\right\vert
^{r}dx\right) ^{1/r}
\end{equation*}%
is a norm in $\mathrm{L}^{r}(\Omega ).$

Let $i^{\ast }:%
\mathrm{L}^{\frac{2n}{n+2}}(\Omega )\rightarrow \mathrm{H}_{0}^{1}(\Omega )$
be the adjoint operator to the embedding $i:\mathrm{H}_{0}^{1}(\Omega
)\hookrightarrow \mathrm{L}^{\frac{2n}{n-2}}(\Omega ),$ i.e. $i^{\ast }(u)=v$
if and only if%
\begin{equation*}
(v,\varphi )=\int_{\Omega } u(x)\varphi (x)dx\quad \text{for all }\varphi
\in C_{c}^{\infty }(\Omega )
\end{equation*}%
if and only if%
\begin{equation*}
-\Delta v=u\quad \text{in}\ \Omega ,\qquad v=0\quad
\text{on}\ \partial \Omega .
\end{equation*}
It is clear that $i^*$ is a continuous map, namely there exists a positive constant $c$ such that
\begin{equation*}
\left\Vert i^{\ast }(u)\right\Vert \leq c\left\Vert u\right\Vert _{\frac{2n}{%
n+2}}\quad \forall \ u\in \mathrm{L}^{\frac{2n}{n+2}}(\Omega ).
\end{equation*}%

To study the slightly supercritical case, namely problem $(\mathcal{AC})_\eps$ with $\eps<0,$   we need to find solutions in the space
$ \mathrm{H}^1_0(\Omega)\cap \mathrm{L}^{s_\varepsilon}(\Omega)$ with
 $s_\eps:={2n\over n-2}-\eps{n\over2}.$
 Indeed, by a well known Hardy-Littlewood-Sobolev inequality (see Hardy-Littlewood \cite{hl} and Sobolev \cite{s})
we deduce that $i^\ast$ restricted to $ \mathrm{H}^1_0(\Omega)\cap \mathrm{L}^{s_\varepsilon}(\Omega)$ is a continuous map, namely
$$\left \|i ^\ast (u)\right \| _{s}\le c \left \|u\right \|_{s_\eps\over {n+2\over n-2}-\eps}
$$ for some positive constant $c $ which depends only on $n.$
We point out that if $\eps>0$ then $\mathrm{H}^1_0(\Omega)\cap \mathrm{L}^{s_\varepsilon}(\Omega)$ coincides with $\mathrm{H}^1_0(\Omega).$

Using the above definitions and notations, it is clear that our problems can be rewritten in the equivalent form

\begin{equation}\label{rep}
\left\{
\begin{aligned}
&u=i ^\ast \left[f_\eps(u)    \right] \\
& u\in \mathfrak{H}_\eps,
\\
\end{aligned}\right.\end{equation}

where

\begin{itemize}
\item[(i)] {\em  for problem $(\mathcal{BN})_\eps$}
$$ f_\eps(u):=|u|^{p-1 }s+\eps u \quad \hbox{and}\ \mathfrak{H}_\eps:=\mathrm{H}^1_0(\Omega) $$

\item[(ii)] {\em   for problem $(\mathcal{AC})_\eps$}
$$f_\eps(u):=|u|^{p-1-\eps}u  \quad \hbox{and}\  \mathfrak{H}_\eps:=\mathrm{H}^1_0(\Omega)\cap \mathrm{L}^{s_\varepsilon}(\Omega)
$$

\item[(iii)] {\em   for problem $(\mathcal{C})_\eps$}
$$f_\eps(u):=|u|^{p-1 }u  \quad \hbox{and}\  \mathfrak{H}_\eps:=\mathrm{H}^1_0(\Omega_\eps).
$$

\end{itemize}

\medskip

\section{The Ljapunov-Schmidt procedure}
\subsection{The approximating solution}

The first step  is writing a {\em good approximating solution.}
\medskip

Let $PW$ denote the projection of the function $W\in D^{1,2}(\mathbb{R}^{n})$
onto $\mathrm{H}_{0}^{1}(D )$, i.e.
\begin{equation*}
\Delta PW=\Delta W\ \text{\ in}\ D ,\qquad PW=0\ \text{\ on}\ \partial
D ,
\end{equation*}
where $D$ is a smooth bounded domain in $\rr^n.$

Let $\kappa\ge1$ be a fixed integer.
 We look for   solutions $u_\eps$ to problems $(\mathcal{BN})_\eps$, $(\mathcal{AC})_\eps$ and $(\mathcal{C})_\eps$  as
 \begin{equation}
u_{\epsilon }(x)=V_{\mathbf{z},\mathbf{%
d} }(x)  +\phi(x),\quad  V_{\mathbf{z},\mathbf{%
d} }(x):=\sum\limits_{i=1}^\kappa \lambda_iPU_{\delta _{i},z _{i}} (x) ,
\label{ans}
\end{equation}%
where the higher order term $\phi$ belongs to a suitable space described in the next subsection. Here $\lambda_i\in\{-1,+1\}$, the concentration points $z_i$'s lye in $\Omega$ and the  concentration parameters $\delta_i$'s
are choose as follows.

\begin{itemize}
\item[$ \bullet$] {\em  Multi-bubbles   for problem $(\mathcal{BN})_\eps$}
$$  \mathfrak{M}-(\mathcal{BN})_\eps\qquad  \left\{
 \begin{aligned}
&\lambda_i\in\{-1,+1\}\\
&z _{1},\dots,z_\kappa\in\Omega\ \hbox{and}\ z_i\not=z_j\\
&\delta _{i}=|\epsilon| ^{\frac{1}{n-4}}d_{i}\quad \text{with}\quad
d_{i}>0 \\
\end{aligned}
\right.
$$
The configuration space is
$$\Lambda:=\left\{(\mathbf{z},\mathbf{d})\ :\  \mathbf{z}=(z_1,\dots,z_\kappa)  \in\Omega^\kappa,\ z_i\not=z_j,\ \mathbf{ d}=(d_1,\dots,d_\kappa)\in (0,+\infty)^\kappa \right\}.$$

\item[$\bullet$] {\em  Multi-bubbles  for problem $(\mathcal{AC})_\eps$}
$$\mathfrak{M}-(\mathcal{AC})_\eps\qquad
 \left\{
 \begin{aligned}
&\lambda_i\in\{-1,+1\}\\
&z _{1},\dots,z_\kappa\in\Omega\quad \hbox{and}\quad z_i\not=z_j\\
&\delta _{i}=|\epsilon| ^{\frac{1}{n-2}}d_{i}\quad \text{with}\quad
d_{i}>0. \\
\end{aligned}
\right.
$$
The configuration space is
$$\Lambda:=\left\{(\mathbf{z},\mathbf{d})\ :\  \mathbf{z}=(z_1,\dots,z_\kappa)  \in\Omega^\kappa,\ z_i\not=z_j,\ \mathbf{ d}=(d_1,\dots,d_\kappa)\in (0,+\infty)^\kappa \right\}.$$

\item[$\bullet$] {\em Tower of bubbles with alternating sign  for problem $(\mathcal{AC})_\eps$ when $\eps>0$ }
$$ \mathfrak{T}-(\mathcal{AC})_\eps \qquad
 \left\{
 \begin{aligned}
&\lambda_i=(-1)^i\\
& z_i =z+\delta_i\sigma_i\in\Omega\quad \text{with}\quad  \sigma_1,\dots,\sigma_{\kappa-1} \in\rr^n \ \hbox{and}\ \sigma_\kappa=0 \\
& \delta_i =\eps^{{2(i-1) +1 \over N-2}}  d_i \quad \hbox{with}\  d_i>0. \\
\end{aligned}
\right.
$$
The configuration space is
$$\Lambda:=\left\{(\mathbf{z},\mathbf{d})\ :\  \mathbf{z}=(\sigma_1,\dots,\sigma_{\kappa-1},z)  \in\rr^{(\kappa-1)n}\times\Omega   ,\ \mathbf{ d}=(d_1,\dots,d_\kappa)\in (0,+\infty)^\kappa \right\}.$$

\item[$\bullet$] {\em Tower of bubbles with alternating sign   for problem $(\mathcal{C})_\eps$}
$$\mathfrak{T}-(\mathcal{C})_\eps\qquad
 \left\{
 \begin{aligned}
&\lambda_i=(-1)^i\\
& z_i =z_0 +\delta_i\sigma_i\in\Omega\quad \text{with}\quad  \sigma_1,\dots,\sigma_{k } \in\rr^N  \\
& \delta_i =\eps^{2i-1 \over 2k}    d_i \quad \hbox{with}\  d_i>0. \\
\end{aligned}
\right.
$$
The configuration space is
$$\Lambda:=\left\{(\mathbf{z},\mathbf{d})\ :\  \mathbf{z}=(\sigma_1,\dots,\sigma_{\kappa } )  \in\rr^{ \kappa n}   ,\ \mathbf{ d}=(d_1,\dots,d_\kappa)\in (0,+\infty)^\kappa \right\}.$$

\end{itemize}

  To say that  $V_{\mathbf{z},\mathbf{%
d} }$ is a {\em good approximating solution} we need to estimate the error
\begin{equation*}
\mathfrak{R}_{\mathbf{z},\mathbf{d} }  :=V_{\mathbf{z},\mathbf{%
d} } -i^{\ast }\left[ f _\eps (V_{\mathbf{z},%
\mathbf{d} })  \right] \in \mathfrak{H}_\eps.
\end{equation*}%

\begin{proposition}
\label{err} For any compact subset $\mathbf{C}$ of $\Lambda $ there exist $%
\epsilon _{0}>0$ and $c>0$ such that for each $\epsilon \in (0,\epsilon
_{0}) $ and $(\mathbf{z},\mathbf{d} )\in \mathbf{C}$  we have
$$
\left\Vert\mathfrak{R} _{\mathbf{z},\mathbf{d} } \right\Vert
\leq c    |\eps|^{\eta} .
$$
for some  $\eta>0$ which depends only on $n $ and $\kappa.$
\end{proposition}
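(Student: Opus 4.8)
The plan is to exploit the continuity of $i^\ast$ to reduce the assertion to a pointwise $L^{\frac{2n}{n+2}}$--estimate — in the supercritical regime $(\mathcal{AC})_\eps$ with $\eps<0$, to the weighted $L^q$--estimate for $i^\ast$ recalled in Section~2 — and then to control the resulting expression term by term. First I would record that, since $-\Delta PU_{\delta_i,z_i}=-\Delta U_{\delta_i,z_i}=U_{\delta_i,z_i}^{p}$ with $p:=\frac{n+2}{n-2}$ and $PU_{\delta_i,z_i}\in\mathfrak H_\eps$, one has $V_{\mathbf z,\mathbf d}=i^\ast\big[\sum_{i=1}^\kappa\lambda_i U_{\delta_i,z_i}^{p}\big]$, so that
\begin{equation*}
\mathfrak R_{\mathbf z,\mathbf d}=i^\ast\Big[\sum_{i=1}^\kappa\lambda_i U_{\delta_i,z_i}^{p}-f_\eps(V_{\mathbf z,\mathbf d})\Big];
\end{equation*}
by continuity of $i^\ast$ it then suffices to estimate the relevant norm of $\sum_i\lambda_i U_{\delta_i,z_i}^{p}-f_\eps(V_{\mathbf z,\mathbf d})$.

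Next I would write this, with $f_0(u):=|u|^{p-1}u$, as $\mathcal E_{\mathrm{proj}}+\mathcal E_{\mathrm{int}}+\mathcal E_\eps$, where
\begin{equation*}
\mathcal E_{\mathrm{proj}}:=\sum_{i=1}^\kappa\lambda_i\big(U_{\delta_i,z_i}^{p}-f_0(PU_{\delta_i,z_i})\big),\qquad \mathcal E_{\mathrm{int}}:=\sum_{i=1}^\kappa\lambda_i f_0(PU_{\delta_i,z_i})-f_0(V_{\mathbf z,\mathbf d}),
\end{equation*}
and $\mathcal E_\eps:=f_0(V_{\mathbf z,\mathbf d})-f_\eps(V_{\mathbf z,\mathbf d})$. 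For $\mathcal E_{\mathrm{proj}}$ I would use the classical estimates for $\varphi_{\delta_i,z_i}:=U_{\delta_i,z_i}-PU_{\delta_i,z_i}$ — namely $0\le\varphi_{\delta_i,z_i}\le U_{\delta_i,z_i}$ and $\varphi_{\delta_i,z_i}=\alpha_n\delta_i^{\frac{n-2}{2}}\big(H(\cdot,z_i)+o(1)\big)$ uniformly on $\mathbf C$ — together with the elementary bound $|f_0(a)-f_0(a-b)|\le c\,(|a|^{p-1}|b|+|b|^{p})$; the rescaling $x=z_i+\delta_i y$ then produces a bound $c\,\delta_i^{\theta}$ with $\theta=\theta(n)>0$. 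For $\mathcal E_{\mathrm{int}}$ I would use the elementary inequality $\big|f_0\big(\sum_i a_i\big)-\sum_i f_0(a_i)\big|\le c\sum_{i\neq j}\big(|a_i|^{p-1}|a_j|+|a_i|\,|a_j|^{p-1}\big)$ (distinguishing, as usual, $p\le2$ and $p>2$), applied with $a_i=\lambda_i PU_{\delta_i,z_i}$, and estimate the cross terms by the standard bubble-interaction integrals: in the multi-bubble families $\mathfrak M$ the points $z_i$ stay at mutually bounded-below distance on $\mathbf C$ while all $\delta_i\to0$, so each cross term is a product of positive powers of the $\delta_i$; in the tower families $\mathfrak T$ the scales satisfy $\delta_{i+1}/\delta_i\to0$ on $\mathbf C$, and the cross terms are controlled by positive powers of these ratios. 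Finally, $\mathcal E_\eps=-\eps V_{\mathbf z,\mathbf d}$ for $(\mathcal{BN})_\eps$, and $\|V_{\mathbf z,\mathbf d}\|_{\frac{2n}{n+2}}$ is bounded by a positive power of $\max_i\delta_i$; $\mathcal E_\eps=|V_{\mathbf z,\mathbf d}|^{p-1}V_{\mathbf z,\mathbf d}\big(1-|V_{\mathbf z,\mathbf d}|^{-\eps}\big)$ for $(\mathcal{AC})_\eps$, controlled through $\big|1-|t|^{-\eps}\big|\le c\,|\eps|\,\big(1+|\ln|t||\big)$, which after integration only costs an extra factor $|\ln|\eps||$; and $\mathcal E_\eps\equiv0$ for $(\mathcal{C})_\eps$. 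Inserting the explicit choice of the $\delta_i$ as powers of $|\eps|$ prescribed in $\mathfrak M$ and $\mathfrak T$, each of the three pieces is $\le c\,|\eps|^{\eta}$ on $\mathbf C$ for a suitable $\eta>0$, and taking the minimum of the exponents produced gives the claim.

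I expect the difficulty to be computational rather than conceptual: one must keep track, through the rescalings $x=z_i+\delta_i y$, of the exact powers of $\delta_i$ — hence of $|\eps|$ — produced by $\mathcal E_{\mathrm{proj}}$ and $\mathcal E_{\mathrm{int}}$, paying attention to whether each integral is governed by the core $\{|x-z_i|\lesssim\delta_i\}$ or by its complement; this is where the threshold $n=6$ (i.e. $p=2$) enters, and where the low dimensions require separate care. In the tower case one needs, in addition, the sharp estimates for $\int_{\mathbb{R}^n}U_{\delta_i,\cdot}^{p-1}U_{\delta_j,\cdot}$ in terms of $\delta_i/\delta_j$, since the bubbles concentrate at the same point at vastly different rates. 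A minor further point, specific to $(\mathcal{C})_\eps$, is that the domain $\Omega_\eps$, and therefore the projection $P$ and the regular part $H$, depend on $\eps$; however the additional contribution of the hole $B(z_0,\eps)$ is itself an explicit power of $\eps$ and is harmless for the present estimate.
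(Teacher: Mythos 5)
The paper itself states Proposition \ref{err} without proof (it is a survey, deferring to \cite{mupi4,blr,delfemu3,mipi1,mupi5,mupi2,gmp}), and your argument --- writing $V_{\mathbf{z},\mathbf{d}}=i^{\ast}\big[\sum_i\lambda_i U_{\delta_i,z_i}^{p}\big]$, invoking the continuity of $i^{\ast}$ (in its weighted form for the supercritical case), and splitting $\sum_i\lambda_i U_{\delta_i,z_i}^{p}-f_\eps(V_{\mathbf{z},\mathbf{d}})$ into the projection, interaction and $\eps$-perturbation pieces estimated via $0\le U-PU\le U$, the expansion of $U-PU$ through the regular part $H$, and the standard bubble-interaction integrals --- is exactly the route taken in those references, and it is correct in outline. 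The only places where more than routine bookkeeping is needed are the ones you already flag: the dimension-dependent exponents (whence $\eta=\eta(n,\kappa)$, since in the tower cases the interaction and, for $(\mathcal{C})_\eps$, the hole correction to $PU_{\delta_i,z_i}$ in $\Omega_\eps$ contribute powers like $|\eps|^{(n-2)/(2\kappa)}$), and the logarithmic loss in the $(\mathcal{AC})_\eps$ nonlinearity, which is absorbed by taking $\eta$ slightly smaller.
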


\subsection{The equation becomes a system}

The second step  is writing  the equation as a system.
\medskip

We need to fix the space where the rest term $\phi$ in \eqref{ans} belongs to.
 It is important to introduce the functions
\begin{equation*}
\psi _{\delta ,z }^{0}(x):={\frac{\partial U_{\delta ,z }}{\partial
\delta }}=\alpha _{n}{\frac{n-2}{2}}\delta ^{\frac{n-4}{2}}{\frac{|x-z
|^{2}-\delta ^{2}}{(\delta ^{2}+|x-z |^{2})^{n/2}}}
\end{equation*}%
and, for each $j=1,\dots ,n,$
\begin{equation*}
\psi _{\delta ,z}^{j}(x):={\frac{\partial U_{\delta ,z }}{\partial z
_{j}}}=\alpha _{n}(n-2)\delta ^{\frac{n-2}{2}}{\frac{x_{j}-z _{j}}{(\delta
^{2}+|x-z |^{2})^{n/2}}},
\end{equation*}%
which span the set of solutions to the linearized problem (see Bianchi-Egnell \cite{be})
\begin{equation*}
-\Delta \psi =f'_0\(U_{\delta ,z }\)\psi \ \hbox{in}\ \mathbb{R}^{n}.
\end{equation*}%

Remember that the $\delta_i$'s, the $z_i$'s and the configuration space $\Lambda$ are given in $\mathfrak{M}-(\mathcal{BN})_\eps$, $\mathfrak{M}-(\mathcal{AC})_\eps$, $\mathfrak{T}-(\mathcal{AC})_\eps$
and $\mathfrak{T}-(\mathcal{C})_\eps$.

If $(\mathbf{z},\mathbf{d})\in\Lambda$,
  we introduce the spaces
\begin{equation*}
K_{\mathbf{z},\mathbf{d} } :=\mathrm{span}\{P\psi _{\delta _{i},z _{i}}^{j}\ :\
i=1,\dots,\kappa,\ j=0,1,\dots ,n\},
\end{equation*}%
\begin{equation*}
K_{\mathbf{z},\mathbf{d} } ^{\perp }:=\left\{ \phi \in \mathfrak{H}_\eps\ :\ (\phi ,P\psi _{\delta _{i},z _{i}}^{j})=0,\ i=1,\dots,\kappa,\ j=0,1,\dots ,n\right\}
\end{equation*}%
and  the  projection operators
\begin{equation*}
\Pi _{\mathbf{z},\mathbf{d} }(u):=\sum\limits_{i=1}^{\kappa}\sum%
\limits_{j=0}^{n}(u,P\psi _{\delta _{i},z _{i}}^{j})P\psi _{\delta _{i},z _{i}}^{j}\quad \hbox{and}\quad \Pi _{%
\mathbf{z},\mathbf{d} }^{\perp }(u):=u-\Pi _{\mathbf{z},\mathbf{d},%
  }(u).
\end{equation*}

Our approach to solve problem \eqref{rep} will be to solve the system
\begin{equation}\label{lj-sc}
\left\{
\begin{aligned}
&\Pi _{\mathbf{z},\mathbf{d} }^{\perp }\Big\{V_{\mathbf{z},\mathbf{%
d} }+\phi -i^{\ast }\left[ f_{\epsilon }(V_{\mathbf{z},\mathbf{%
d} }+\phi )\right] \Big\} =0, \\
&\Pi _{\mathbf{z},\mathbf{d} }\Big\{ V_{\mathbf{z},\mathbf{d},%
  }+\phi -i^{\ast }\left[  f_{\epsilon }(V_{\mathbf{z},\mathbf{d},%
  }+\phi )\right] \Big\} =0,  \\
&(\mathbf{z},\mathbf{d} )\in \Lambda \quad\hbox{and}\quad \phi \in K_{\mathbf{z},\mathbf{d} }^{\perp }.\\ \end{aligned}
  \right.
\end{equation}

\subsection{The reduction argument}

The third  step   is reducing the problem to a finite dimensional one.
\medskip

 The first result we need concerns the invertibility of the linear operator    $\mathfrak{L}_{\mathbf{z},\mathbf{d} }:K_{\mathbf{z},\mathbf{d%
} }^{\perp }\rightarrow K_{\mathbf{z},\mathbf{d} %
}^{\perp }$ defined by
\begin{equation*}
\mathfrak{L}_{\mathbf{z},\mathbf{d} }\phi :=\phi -\Pi _{\mathbf{z},\mathbf{d},%
  }^{\perp }i^{\ast }\left[ f^{\prime }_0(V_{\mathbf{z},%
\mathbf{d} })\phi \right] .
\end{equation*}%

We will prove the following.

\begin{proposition}
\label{pro1} For any compact subset $\mathbf{C}$ of $\Lambda $ there exist $%
\epsilon _{0}>0$ and $c>0$ such that for each $\epsilon \in (0,\epsilon
_{0}) $ and $(\mathbf{z},\mathbf{d} )\in \mathbf{C}$ the operator $%
L_{\mathbf{z},\mathbf{d} }$ is invertible and
\begin{equation*}
\left\Vert L_{\mathbf{z},\mathbf{d} }\phi \right\Vert \geq
c\left\Vert \phi \right\Vert \ \quad \ \forall \ \phi \in K_{\mathbf{z},%
\mathbf{d} }^{\perp }.
\end{equation*}
\end{proposition}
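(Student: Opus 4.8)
The plan is to establish the uniform lower bound $\|\mathfrak{L}_{\mathbf{z},\mathbf{d}}\phi\|\ge c\|\phi\|$ by contradiction, in the standard Lyapunov--Schmidt fashion, since invertibility then follows from the fact that $\mathfrak{L}_{\mathbf{z},\mathbf{d}}$ is a compact perturbation of the identity (the map $\phi\mapsto \Pi_{\mathbf{z},\mathbf{d}}^{\perp}i^{\ast}[f_0'(V_{\mathbf{z},\mathbf{d}})\phi]$ is compact because $i^{\ast}$ is compact on the relevant spaces), so it is injective iff surjective and the estimate gives a bounded inverse. So suppose the estimate fails: there are $\epsilon_m\to 0$, points $(\mathbf{z}_m,\mathbf{d}_m)\in\mathbf{C}$ and functions $\phi_m\in K_{\mathbf{z}_m,\mathbf{d}_m}^{\perp}$ with $\|\phi_m\|=1$ and $\|\mathfrak{L}_{\mathbf{z}_m,\mathbf{d}_m}\phi_m\|\to 0$. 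Writing $h_m:=\mathfrak{L}_{\mathbf{z}_m,\mathbf{d}_m}\phi_m$, this means
\begin{equation*}
\phi_m-\Pi_{\mathbf{z}_m,\mathbf{d}_m}^{\perp}i^{\ast}\bigl[f_0'(V_{\mathbf{z}_m,\mathbf{d}_m})\phi_m\bigr]=h_m,\qquad \|h_m\|\to 0 .
\end{equation*}
By compactness of $\mathbf{C}$ we may assume $z_i^m\to z_i^0$ (distinct, in $\Omega$, or with the tower structure preserved) and $d_i^m\to d_i^0>0$.

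The core step is a blow-up analysis around each concentration point. For each index $i$, rescale: set $\widetilde\phi_m^i(y):=\delta_i^{(n-2)/2}\phi_m(\delta_i^m y+z_i^m)$. Since $\|\phi_m\|=1$, these rescaled functions are bounded in $D^{1,2}(\rr^n)$, so up to a subsequence $\widetilde\phi_m^i\rightharpoonup \phi^i$ weakly. Testing the equation for $\phi_m$ against rescaled test functions and passing to the limit, one shows that each limit $\phi^i$ solves the linearized limit equation $-\Delta\phi^i=f_0'(U_{1,0})\phi^i$ in $\rr^n$ with $\phi^i\in D^{1,2}(\rr^n)$; by the Bianchi--Egnell classification quoted in the excerpt, $\phi^i$ is a linear combination of $\psi_{1,0}^0,\psi_{1,0}^1,\dots,\psi_{1,0}^n$. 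But the orthogonality conditions $(\phi_m,P\psi_{\delta_i^m,z_i^m}^j)=0$ pass to the limit and force all these coefficients to vanish, hence $\phi^i\equiv 0$ for every $i$. This uses the standard estimates $\|P\psi_{\delta,z}^j-\psi_{\delta,z}^j\|=O(\delta^{(n-2)/2})$ (suitably interpreted) and the fact that the $PU_{\delta_i,z_i}$'s and their derivatives interact weakly because the points are distinct or the scales are well-separated in the tower case.

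With $\phi^i\equiv 0$ established, I would show $\|\phi_m\|\to 0$, contradicting $\|\phi_m\|=1$. Take the scalar product of the equation for $\phi_m$ with $\phi_m$ itself: $\|\phi_m\|^2=(i^{\ast}[f_0'(V_{\mathbf{z}_m,\mathbf{d}_m})\phi_m],\phi_m)+(h_m,\phi_m)$. The last term is $o(1)$. For the first term, by definition of $i^{\ast}$ it equals $\int_\Omega f_0'(V_{\mathbf{z}_m,\mathbf{d}_m})\phi_m^2\,dx$, which we must show tends to $0$. Here one splits $\rr^n$ into the concentration regions $B(z_i^m,\rho\delta_i^m)$, annular transition regions, and the complement. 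On each concentration ball, after rescaling the integral becomes $\int f_0'(U_{1,0})(\widetilde\phi_m^i)^2$ up to small errors, and since $\widetilde\phi_m^i\to 0$ strongly in $L^{2^*}_{loc}$ (by Rellich) while $f_0'(U_{1,0})\in L^{n/2}$, this is $o(1)$; outside all concentration balls $f_0'(V_{\mathbf{z}_m,\mathbf{d}_m})$ is small in $L^{n/2}$, so Hölder with the Sobolev inequality gives $o(1)\|\phi_m\|^2$. For problem $(\mathcal{BN})_\epsilon$ there is the extra linear term $\epsilon\int\phi_m^2$ which is $O(\epsilon_m)\to 0$; for $(\mathcal{AC})_\epsilon$ with the exponent perturbed by $\epsilon$ one argues that $f_\epsilon'$ and $f_0'$ differ by a quantity that is $o(1)$ in the relevant norm. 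Combining, $\|\phi_m\|^2=o(1)+o(1)\|\phi_m\|^2$, so $\|\phi_m\|\to 0$, the desired contradiction.

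The main obstacle is the blow-up analysis when several bubbles are present: one must carefully control the cross-interactions between distinct bubbles and, in the tower case, between bubbles living at different scales, to be sure the weak limits decouple and that the orthogonality conditions genuinely annihilate each $\phi^i$ separately. This requires the by-now classical but delicate interaction estimates for the $PU_{\delta_i,z_i}$ and $P\psi_{\delta_i,z_i}^j$; once those are in hand the argument above is routine. I would carry out the details of these estimates by referring to the computations in \cite{mupi4} for the multi-bubble case and \cite{mupi5,mupi2,gmp} for the towers.
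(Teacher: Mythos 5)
The paper itself contains no proof of Proposition \ref{pro1}: it is a survey that defers to the cited works (\cite{mupi4,blr,delfemu3,mipi1,mupi5,mupi2,gmp}), and the argument used there is exactly the contradiction/blow-up scheme you outline (rescaling around each concentration point, Bianchi--Egnell classification of the kernel, orthogonality conditions annihilating the limits, then energy decoupling to contradict $\|\phi_m\|=1$), so your proposal is essentially the intended proof. One small correction: $i^{\ast}$ is \emph{not} compact (it is the adjoint of the non-compact critical embedding); the compactness of $\phi\mapsto \Pi^{\perp}_{\mathbf{z},\mathbf{d}}\,i^{\ast}[\,f_0'(V_{\mathbf{z},\mathbf{d}})\phi\,]$ comes instead from the multiplication operator: since $f_0'(V_{\mathbf{z},\mathbf{d}})\in L^{n/2}(\Omega)$, one approximates it in $L^{n/2}$ by bounded functions and uses Rellich to see that $\phi\mapsto f_0'(V_{\mathbf{z},\mathbf{d}})\phi$ is compact from $\mathrm{H}^1_0(\Omega)$ into $\mathrm{L}^{\frac{2n}{n+2}}(\Omega)$; with that fixed, the Fredholm alternative together with your a priori lower bound yields the stated invertibility.
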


Secondly, we solve the first equation in system \eqref{lj-sc}, namely
for each $(\mathbf{z},\mathbf{d} )\in \Lambda
$ and small $\epsilon $ we find  a function $\phi \in K_{\mathbf{z},\mathbf{d},%
  }^{\perp }$ which solves the first equation in system \eqref{lj-sc}.
To do this, we use a simple contraction mapping argument together with
 the estimate of the rest term given in Proposition \ref{err}.

\begin{proposition}
\label{pro2} For any compact subset $\mathbf{C}$ of $\Lambda $ there exist $%
\epsilon _{0}>0$ and $c>0$ such that for each $\epsilon \in (0,\epsilon
_{0}) $ and $(\mathbf{z},\mathbf{d} )\in \mathbf{C}$ there exists
a unique $\phi _{\mathbf{z},\mathbf{d} }^{\epsilon }\in K_{\mathbf{%
s},\mathbf{d} }^{\perp }$ which solves the first equation in system \eqref{lj-sc} and satisfies
$$
\left\Vert \phi _{\mathbf{z},\mathbf{d} }^{\epsilon }\right\Vert
\leq c    |\eps|^{\eta} , \label{eq2-pro2-1}
$$
where $\eta$ is given in Proposition \ref{err}.
\end{proposition}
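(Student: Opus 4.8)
The plan is to solve the first equation in \eqref{lj-sc} by rewriting it as a fixed point problem for a contraction on a small ball of $K_{\mathbf{z},\mathbf{d}}^\perp$. Observe that, setting $N_\eps(\phi):=f_\eps(V_{\mathbf{z},\mathbf{d}}+\phi)-f_\eps(V_{\mathbf{z},\mathbf{d}})-f'_0(V_{\mathbf{z},\mathbf{d}})\phi$, the first equation in \eqref{lj-sc} is equivalent to
\begin{equation*}
\mathfrak{L}_{\mathbf{z},\mathbf{d}}\phi=\Pi^{\perp}_{\mathbf{z},\mathbf{d}}i^{\ast}\big[N_\eps(\phi)\big]-\mathfrak{R}_{\mathbf{z},\mathbf{d}}+\Pi^{\perp}_{\mathbf{z},\mathbf{d}}i^{\ast}\big[(f_\eps-f_0)(V_{\mathbf{z},\mathbf{d}})\big],
\end{equation*}
where the last term accounts for the difference between $f_\eps$ and $f_0$ (it is absent for $(\mathcal{C})_\eps$ and is a genuinely lower-order correction for $(\mathcal{BN})_\eps$ and $(\mathcal{AC})_\eps$). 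By Proposition \ref{pro1} the operator $\mathfrak{L}_{\mathbf{z},\mathbf{d}}$ is invertible with inverse bounded uniformly on $\mathbf{C}$, so the equation becomes $\phi=\mathfrak{T}_{\mathbf{z},\mathbf{d}}(\phi)$ with
\begin{equation*}
\mathfrak{T}_{\mathbf{z},\mathbf{d}}(\phi):=\mathfrak{L}_{\mathbf{z},\mathbf{d}}^{-1}\Big(\Pi^{\perp}_{\mathbf{z},\mathbf{d}}i^{\ast}[N_\eps(\phi)]-\mathfrak{R}_{\mathbf{z},\mathbf{d}}+\Pi^{\perp}_{\mathbf{z},\mathbf{d}}i^{\ast}[(f_\eps-f_0)(V_{\mathbf{z},\mathbf{d}})]\Big).
\end{equation*}

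Next I would show that $\mathfrak{T}_{\mathbf{z},\mathbf{d}}$ maps the ball $B_\eta:=\{\phi\in K_{\mathbf{z},\mathbf{d}}^\perp:\|\phi\|\le C|\eps|^\eta\}$ into itself and is a contraction there, for a suitable large constant $C$ and with the same exponent $\eta$ as in Proposition \ref{err}. The two ingredients are: first, $\|\mathfrak{R}_{\mathbf{z},\mathbf{d}}\|\le c|\eps|^\eta$ by Proposition \ref{err}, and a similar bound $\|i^\ast[(f_\eps-f_0)(V_{\mathbf{z},\mathbf{d}})]\|\le c|\eps|^\eta$ (possibly after shrinking $\eta$), using the continuity of $i^\ast$ on the relevant $L^p$ space together with the explicit decay of $PU_{\delta_i,z_i}$; second, the quadratic-type estimates
\begin{equation*}
\|i^\ast[N_\eps(\phi)]\|\le c\,\|\phi\|^{\min\{2,p\}}+\text{(small $\eps$-dependent error)},\qquad \|i^\ast[N_\eps(\phi_1)-N_\eps(\phi_2)]\|\le c\big(\|\phi_1\|+\|\phi_2\|\big)^{\min\{1,p-1\}}\|\phi_1-\phi_2\|,
\end{equation*}
which follow from the mean value theorem applied to $f_\eps$, the Sobolev embedding $\mathrm{H}^1_0\hookrightarrow L^{2^*}$ (or its Hardy--Littlewood--Sobolev refinement in the supercritical case, exactly as in Section 2), and Hölder's inequality. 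On $B_\eta$ the nonlinear term is then of order $|\eps|^{\eta\min\{2,p\}}\ll|\eps|^\eta$, so the right-hand side is dominated by the $\mathfrak{R}_{\mathbf{z},\mathbf{d}}$ term; choosing $C$ large enough absorbs all constants, and the Lipschitz constant of $\mathfrak{T}_{\mathbf{z},\mathbf{d}}$ on $B_\eta$ is $o(1)$ as $\eps\to0$. The Banach fixed point theorem then yields a unique $\phi^\eps_{\mathbf{z},\mathbf{d}}\in B_\eta$ solving the first equation, which is the assertion; uniqueness in the whole $K_{\mathbf{z},\mathbf{d}}^\perp$ within the ball $B_\eta$ is built into the contraction.

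The main obstacle is the nonlinear estimate on $N_\eps(\phi)$ in low dimensions and in the slightly supercritical regime. When $n$ is large, $p=2^*-1<2$, so $f_\eps$ is only Hölder of exponent $p-1$ rather than $C^1$, and the bound on $N_\eps(\phi_1)-N_\eps(\phi_2)$ must be carried out carefully with the inequality $\big||a+b|^{p-1}(a+b)-|a|^{p-1}a-\text{(linear)}\big|\lesssim |b|^p$ valid for $p\le2$; one must check that the resulting power of $\|\phi\|$ still beats $|\eps|^\eta$. In the supercritical case $\eps<0$ for $(\mathcal{AC})_\eps$ one does not have the Sobolev embedding available and must systematically replace it by the Hardy--Littlewood--Sobolev continuity of $i^\ast$ on $\mathrm{H}^1_0\cap L^{s_\eps}$ recalled in Section 2, keeping track of the exponent $s_\eps$ throughout; the norms must be taken in $\mathfrak{H}_\eps$ and all Hölder pairings tuned so that the $\eps$-dependent exponents close up. Everything else — the uniform invertibility, the compactness of $\mathbf{C}$, the reduction — is furnished by Propositions \ref{err} and \ref{pro1}, so the proof is a standard contraction argument once these estimates are in place.
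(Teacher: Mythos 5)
Your argument is correct and is exactly the approach the paper indicates for Proposition \ref{pro2}: a contraction mapping argument in a small ball of $K_{\mathbf{z},\mathbf{d}}^{\perp}$, combining the uniform invertibility of $\mathfrak{L}_{\mathbf{z},\mathbf{d}}$ from Proposition \ref{pro1} with the error estimate of Proposition \ref{err}, plus the standard quadratic/H\"older estimates on the nonlinear remainder. Note only that the paper defines $\mathfrak{R}_{\mathbf{z},\mathbf{d}}=V_{\mathbf{z},\mathbf{d}}-i^{\ast}\left[f_\eps(V_{\mathbf{z},\mathbf{d}})\right]$ with $f_\eps$ (not $f_0$), so your correction term $\Pi^{\perp}_{\mathbf{z},\mathbf{d}}i^{\ast}\left[(f_\eps-f_0)(V_{\mathbf{z},\mathbf{d}})\right]$ is already absorbed into $\mathfrak{R}_{\mathbf{z},\mathbf{d}}$ and the fixed point equation reduces to $\mathfrak{L}_{\mathbf{z},\mathbf{d}}\phi=\Pi^{\perp}_{\mathbf{z},\mathbf{d}}i^{\ast}\left[N_\eps(\phi)\right]-\Pi^{\perp}_{\mathbf{z},\mathbf{d}}\mathfrak{R}_{\mathbf{z},\mathbf{d}}$, which only simplifies your estimates.
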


Finally, we reduce  the problem to a finite dimensional one.
\medskip
We introduce the energy functional $J_{\epsilon }: \mathfrak{H}_\eps\rightarrow \mathbb{R}$ defined by
\begin{equation*}
J_{\epsilon }(u):={\frac{1}{2}}\int\limits_{\Omega } |\nabla u|^{2}dx- \int\limits_{\Omega } F_\eps(u),
\end{equation*}%
 where

\begin{itemize}
\item[(i)] {\em  for problem $(\mathcal{BN})_\eps$}
$$ F_\eps(u):={1\over p+1 }|u|^{p+1 } +{1\over2}\eps u^2 \quad \hbox{and}\ \mathfrak{H}_\eps:=H^1_0(\Omega)  $$

\item[(ii)] {\em   for problem $(\mathcal{AC})_\eps$}
$$F_\eps(u):={1\over p+1-\eps}|u|^{p+1 -\eps} \quad \hbox{and}\  \mathfrak{H}_\eps:=H^1_0(\Omega)\cap L^{s_\varepsilon}(\Omega)
$$

\item[(iii)] {\em   for problem $(\mathcal{C})_\eps$}
$$F_\eps(u):={1\over p+1 }|u|^{p+1  } \quad \hbox{and}\  \mathfrak{H}_\eps:=H^1_0(\Omega_\eps).
$$

\end{itemize}

It is well known that critical points of $J_{\epsilon }$ are the solutions to problem \eqref{rep}. We introduce
the {\em reduced energy} $\widetilde{J}_{\epsilon }:\Lambda \rightarrow
\mathbb{R}$ by
\begin{equation*}
\widetilde{J}_{\epsilon }(\mathbf{z},\mathbf{d} ):=J_{\epsilon
}(V_{\mathbf{z},\mathbf{d} }+\phi _{\mathbf{z},\mathbf{d}  }^{\epsilon })
\end{equation*}%
We prove that   critical points of $\widetilde{J}_{\epsilon }$ generates 
  solutions to the second equation in  system \eqref{lj-sc} and so blowing-up solutions to problem \eqref{rep}.

\begin{proposition}
\label{pro3} The function $V_{\mathbf{z},\mathbf{d} }+\phi _{%
\mathbf{z},\mathbf{d} }^{\epsilon }$ is a critical point of the
functional $J_{\epsilon }$ if and only if the point $(\mathbf{z},\mathbf{d}
  )$ is a critical point of the function $\widetilde{J}_{\epsilon }.$
\end{proposition}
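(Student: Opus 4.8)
The plan is to exploit the standard fact that the full problem $u=i^\ast[f_\epsilon(u)]$ is the Euler--Lagrange equation of $J_\epsilon$, together with the variational structure of the Lyapunov--Schmidt splitting. Write $V=V_{\mathbf z,\mathbf d}$, $\phi=\phi_{\mathbf z,\mathbf d}^\epsilon$, and recall from Proposition~\ref{pro2} that $\phi$ is the unique small solution of the first equation in \eqref{lj-sc}, i.e. $\Pi_{\mathbf z,\mathbf d}^\perp\{V+\phi-i^\ast[f_\epsilon(V+\phi)]\}=0$, which is equivalent to saying that $J'_\epsilon(V+\phi)$ annihilates $K_{\mathbf z,\mathbf d}^\perp$. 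Hence $J'_\epsilon(V+\phi)\in K_{\mathbf z,\mathbf d}$, so the second equation in \eqref{lj-sc} holds if and only if $J'_\epsilon(V+\phi)=0$, i.e. $V+\phi$ is a genuine critical point of $J_\epsilon$. The whole proposition therefore reduces to showing: $(\mathbf z,\mathbf d)$ is a critical point of $\widetilde J_\epsilon$ if and only if $J'_\epsilon(V+\phi)$ vanishes on $K_{\mathbf z,\mathbf d}$ as well.

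The first step is to differentiate $\widetilde J_\epsilon(\mathbf z,\mathbf d)=J_\epsilon(V_{\mathbf z,\mathbf d}+\phi_{\mathbf z,\mathbf d}^\epsilon)$ with respect to the parameters. Let $\xi$ denote any one of the coordinates of $(\mathbf z,\mathbf d)$ in $\Lambda$. By the chain rule,
\begin{equation*}
\partial_\xi \widetilde J_\epsilon(\mathbf z,\mathbf d)
= J'_\epsilon(V+\phi)\Big[\partial_\xi V+\partial_\xi\phi\Big].
\end{equation*}
Here one must first justify that $(\mathbf z,\mathbf d)\mapsto\phi_{\mathbf z,\mathbf d}^\epsilon$ is $C^1$; this follows from the implicit function theorem applied to the first equation in \eqref{lj-sc}, using the uniform invertibility of $\mathfrak L_{\mathbf z,\mathbf d}$ from Proposition~\ref{pro1} and the smooth dependence of $V$, $i^\ast$ and $f_\epsilon$ on the data. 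Since $J'_\epsilon(V+\phi)$ vanishes on $K_{\mathbf z,\mathbf d}^\perp$ and $\partial_\xi\phi$ need not lie in that space (because the space itself moves with the parameters), we split $\partial_\xi\phi = P_\perp\partial_\xi\phi + P_K\partial_\xi\phi$ where $P_K,P_\perp$ are the projections onto $K_{\mathbf z,\mathbf d}$ and $K_{\mathbf z,\mathbf d}^\perp$; the first piece is killed, so
\begin{equation*}
\partial_\xi \widetilde J_\epsilon(\mathbf z,\mathbf d)
= J'_\epsilon(V+\phi)\Big[\partial_\xi V + P_K\partial_\xi\phi\Big].
\end{equation*}
Both $\partial_\xi V$ and $P_K\partial_\xi\phi$ lie in $K_{\mathbf z,\mathbf d}$ (for $\partial_\xi V$ this is immediate since $V$ is a combination of the $PU_{\delta_i,z_i}$ and differentiating produces the $P\psi^j_{\delta_i,z_i}$ spanning $K_{\mathbf z,\mathbf d}$, up to the sign factors $\lambda_i$). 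Thus $\partial_\xi\widetilde J_\epsilon$ is a linear functional of $J'_\epsilon(V+\phi)$ restricted to $K_{\mathbf z,\mathbf d}$.

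The implication ``$J'_\epsilon(V+\phi)=0 \Rightarrow (\mathbf z,\mathbf d)$ critical'' is then immediate from the displayed formula. For the converse, suppose $(\mathbf z,\mathbf d)$ is a critical point of $\widetilde J_\epsilon$, so $\partial_\xi\widetilde J_\epsilon=0$ for every coordinate $\xi$. Writing $J'_\epsilon(V+\phi)=\sum_{i,j}c_{ij}P\psi^j_{\delta_i,z_i}$ (possible since it lies in $K_{\mathbf z,\mathbf d}$), the vanishing of all $\partial_\xi\widetilde J_\epsilon$ becomes a homogeneous linear system for the coefficients $c_{ij}$ whose matrix has entries $(P\psi^j_{\delta_i,z_i},\partial_\xi V+P_K\partial_\xi\phi)$. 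The main obstacle is to show this matrix is invertible for $\epsilon$ small: one checks that, after the appropriate rescaling in $\delta_i$, the leading term is $(P\psi^j_{\delta_i,z_i},\partial_\xi V)$, which up to a nonsingular normalization is a block-diagonal Gram-type matrix of the $\psi^j$'s on $\mathbb R^n$ (nondegenerate by Bianchi--Egnell \cite{be}), while the correction coming from $P_K\partial_\xi\phi$ is a higher-order perturbation controlled by $\|\phi\|\le c|\epsilon|^\eta$ from Proposition~\ref{pro2} and by differentiating the estimate in the parameters. Hence the only solution is $c_{ij}=0$ for all $i,j$, i.e. $J'_\epsilon(V+\phi)=0$, which combined with the first paragraph shows $V+\phi$ solves \eqref{lj-sc} entirely and therefore \eqref{rep}. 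The delicate part, as usual in these reductions, is precisely this nondegeneracy-of-the-coupling-matrix estimate, since it requires $C^1$ control of $\phi$ in the concentration parameters and a careful accounting of the different scales $\delta_i$ appearing in $\mathfrak M$- and $\mathfrak T$-type configurations.
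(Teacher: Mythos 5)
Your argument is correct and is essentially the standard Ljapunov--Schmidt reduction proof of Proposition \ref{pro3}: the survey itself states the proposition without proof (deferring to the cited references), and what you write --- $J_\epsilon'(V+\phi)$ vanishes on $K_{\mathbf z,\mathbf d}^{\perp}$ by the first equation of \eqref{lj-sc}, $C^1$ dependence of $\phi_{\mathbf z,\mathbf d}^{\epsilon}$ via the implicit function theorem and Proposition \ref{pro1}, and invertibility of the coupling matrix $(P\psi^j_{\delta_i,z_i},\partial_\xi V+P_K\partial_\xi\phi)$ for small $\epsilon$ --- is exactly the scheme used there. The only remark worth adding is that the last step is usually handled by differentiating the constraint $(\phi,P\psi^j_{\delta_i,z_i})=0$ in the parameters, which gives $(\partial_\xi\phi,P\psi^j_{\delta_i,z_i})=-(\phi,\partial_\xi P\psi^j_{\delta_i,z_i})$ and lets the estimate $\|\phi\|\le c|\epsilon|^{\eta}$ absorb the correction term without needing sharp bounds on $\partial_\xi\phi$ itself.
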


\subsection{The reduced problem}
The last step is  looking for critical points of the   {\em reduced energy} $\widetilde{%
J}_{\epsilon }.$
\medskip

To to this,   we need an accurate asymptotic expansion of the   {\em reduced energy}  $\widetilde{%
J}_{\epsilon },$

\begin{proposition}
\label{pro4-1}It holds true that
\begin{align}\label{otto}
& \widetilde{J}_{\epsilon }(\mathbf{z},\mathbf{d} )=a(\eps)+b|\eps|^\gamma \Phi(\mathbf{z},\mathbf{d} )+o\(|\epsilon |^{\gamma}\)
 \end{align}%
$C^{1}$-uniformly on compact sets of $\Lambda .$
Here $a(\eps)$ is constant which depends only on $n,$ $\kappa$ and $\eps $ and $b$ is a constant.
The positive constant $\gamma$ and the function $\Phi$ are defined as follows.
\begin{itemize}
\item[] { In the case $\mathfrak{M}-(\mathcal{BN})_\eps:$}\qquad
$\gamma:= {n-2\over n-4}$\qquad  and
\begin{equation}\label{12} \Phi ( \mathbf{z},\mathbf{d} ):=
\left\{
\begin{aligned}
& \sum\limits_{i=1}^\kappa d_i^{n-2}H(z_i,z_i)+\sum\limits_{i=1}^\kappa \lambda_i\lambda_j\(d_id_j\)^{n-2\over2}G(z_i,z_j)-
  \sum\limits_{i=1}^\kappa   d_i^2 &\hbox{if $\eps>0$}\\
 &\sum\limits_{i=1}^\kappa d_i^{n-2}H(z_i,z_i)+\sum\limits_{i=1}^\kappa \lambda_i\lambda_j\(d_id_j\)^{n-2\over2}G(z_i,z_j)+
  \sum\limits_{i=1}^\kappa   d_i^2 &\hbox{if $\eps<0$}\\
\end{aligned}  \right.
\end{equation}
$$ $$
\item[] { In the case $\mathfrak{M}-(\mathcal{AC})_\eps:$}\qquad
  $\gamma:= {1}$ \qquad and
\begin{equation}\label{34} \Phi ( \mathbf{z},\mathbf{d} ):=
\left\{
\begin{aligned}
& \sum\limits_{i=1}^\kappa d_i^{n-2}H(z_i,z_i)+\sum\limits_{i=1}^\kappa \lambda_i\lambda_j\(d_id_j\)^{n-2\over2}G(z_i,z_j)-
  \sum\limits_{i=1}^\kappa  \ln d_i  &\hbox{if $\eps>0$}\\
 &\sum\limits_{i=1}^\kappa d_i^{n-2}H(z_i,z_i)+\sum\limits_{i=1}^\kappa \lambda_i\lambda_j\(d_id_j\)^{n-2\over2}G(z_i,z_j)+
  \sum\limits_{i=1}^\kappa \ln  d_i  &\hbox{if $\eps<0$}\\
\end{aligned}  \right.
\end{equation}

 $$ $$
\item[] { In the case $\mathfrak{T}-(\mathcal{AC})_\eps :$}\qquad
$\gamma:= {1}$\qquad and
\begin{equation}\label{5}\Psi ( \mathbf{z},\mathbf{d} ):= H(z ,z )d_1^{n-2}- \sum\limits_{i=1}^{\kappa-1}{1\over(1+\sigma_i^2)^{n-2\over2}}\({d_{i+1}\over d_i}\)^{n-2\over2}
- \sum\limits_{j=1}^\kappa\ln d_i.\end{equation}

$$ $$
\item[] { In the case $\mathfrak{T}-(\mathcal{C})_\eps:$}\qquad
  $\gamma:= {{n-2\over2\kappa}}$\qquad and
\begin{equation}\label{6}\Psi ( \mathbf{z},\mathbf{d} ):= H(z_0,z_0)d_1^{n-2}+{1\over(1+\sigma_k^2)^{n-2}}{1\over d_k^{n-2}}+ \sum\limits_{j=1}^{\kappa-1}{1\over(1+\sigma_j^2)^{n-2\over2}}\({d_{j+1}\over d_j}\)^{n-2\over2}.\end{equation}

\end{itemize}

\end{proposition}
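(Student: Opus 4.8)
The plan is to compute $\widetilde{J}_{\epsilon }(\mathbf{z},\mathbf{d} )=J_{\epsilon }(V_{\mathbf{z},\mathbf{d} }+\phi _{\mathbf{z},\mathbf{d} }^{\epsilon })$ by first reducing it to $J_{\epsilon }(V_{\mathbf{z},\mathbf{d} })$ up to a negligible error, and then expanding the latter explicitly in powers of $\eps$. For the first reduction, I would Taylor-expand $J_{\epsilon }$ around $V_{\mathbf{z},\mathbf{d} }$: since $J_{\epsilon }'(V_{\mathbf{z},\mathbf{d} })[\phi ]$ is controlled by the error term $\mathfrak{R}_{\mathbf{z},\mathbf{d} }$ and $J_{\epsilon }''$ is bounded on the compact set, one gets
\begin{equation*}
J_{\epsilon }(V_{\mathbf{z},\mathbf{d} }+\phi _{\mathbf{z},\mathbf{d} }^{\epsilon })=J_{\epsilon }(V_{\mathbf{z},\mathbf{d} })+O\(\|\mathfrak{R}_{\mathbf{z},\mathbf{d} }\|\,\|\phi _{\mathbf{z},\mathbf{d} }^{\epsilon }\|+\|\phi _{\mathbf{z},\mathbf{d} }^{\epsilon }\|^{2}\)=J_{\epsilon }(V_{\mathbf{z},\mathbf{d} })+O\(|\eps|^{2\eta }\),
\end{equation*}
using Propositions \ref{err} and \ref{pro2}. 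One then checks that $2\eta >\gamma $ in each of the four cases (this is where the specific choices of the rates $\delta _{i}$ in $\mathfrak{M}-(\mathcal{BN})_\eps$, etc., were designed to make the exponents match), so the remainder is $o(|\eps|^{\gamma })$. The $C^{1}$-version requires differentiating this identity in $(\mathbf{z},\mathbf{d} )$; here one uses that $\partial _{(\mathbf{z},\mathbf{d} )}\phi _{\mathbf{z},\mathbf{d} }^{\epsilon }$ is also $O(|\eps|^{\eta })$ (by differentiating the fixed-point equation) and that $J_{\epsilon }'(V_{\mathbf{z},\mathbf{d} }+\phi )$ vanishes on $K_{\mathbf{z},\mathbf{d} }^{\perp }$, so the $\phi $-derivative contributions drop out.

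The core of the proof is then the asymptotic expansion of $J_{\epsilon }(V_{\mathbf{z},\mathbf{d} })$ with $V_{\mathbf{z},\mathbf{d} }=\sum _{i}\lambda _{i}PU_{\delta _{i},z_{i}}$. I would split $J_{\epsilon }$ into the Dirichlet part $\frac12\|V_{\mathbf{z},\mathbf{d} }\|^{2}$ and the nonlinear part $\int _{\Omega }F_\eps(V_{\mathbf{z},\mathbf{d} })$. For the Dirichlet part, expand the square and use the standard estimates $PU_{\delta ,z}=U_{\delta ,z}-\alpha _{n}\delta ^{\frac{n-2}{2}}H(\cdot ,z)+o(\cdots )$; the diagonal terms contribute the universal constant $\|U\|^{2}_{D^{1,2}}$ plus a term proportional to $\delta _{i}^{n-2}H(z_{i},z_{i})$, and the cross terms $(\lambda _{i}PU_{\delta _{i},z_{i}},\lambda _{j}PU_{\delta _{j},z_{j}})$ produce, after interaction estimates, terms proportional to $\lambda _{i}\lambda _{j}(\delta _{i}\delta _{j})^{\frac{n-2}{2}}G(z_{i},z_{j})$ for multi-bubbles, and the ``tower'' interaction terms $(\delta _{i+1}/\delta _{i})^{\frac{n-2}{2}}(1+|\sigma _{i}|^{2})^{-\frac{n-2}{2}}$ in the tower cases (where the bubbles are nested rather than separated). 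For the nonlinear part one uses that $-\Delta PU_{\delta _{i},z_{i}}=U_{\delta _{i},z_{i}}^{p}$, integrates by parts, and in the $(\mathcal{BN})_\eps$ case picks up the extra $\frac{\eps }{2}\int _{\Omega }V_{\mathbf{z},\mathbf{d} }^{2}\sim c\,\eps \sum _{i}\delta _{i}^{2}$ (recall $\int U_{\delta ,z}^{2}\sim c\,\delta ^{2}$ in dimension $n\ge 5$, with a log correction absorbed into $a(\eps)$), while in the $(\mathcal{AC})_\eps$ case one Taylor-expands $|V|^{p+1-\eps}=|V|^{p+1}e^{-\eps \ln |V|}\sim |V|^{p+1}(1-\eps \ln |V|)$ and the $\int |V|^{p+1}\ln |V|$ term produces $-\eps \sum _{i}\ln \delta _{i}$ after the change of variables $x=z_{i}+\delta _{i}y$ (the $\ln \delta _{i}$ comes out of $\ln |V|\approx \ln (\delta _{i}^{-\frac{n-2}{2}}U(y))$). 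Substituting $\delta _{i}=|\eps|^{1/(n-4)}d_{i}$ (resp. $|\eps|^{1/(n-2)}d_{i}$, resp. the tower scalings) and collecting the lowest-order terms yields the stated $b|\eps|^{\gamma }\Phi $; the sign in front of $\sum d_{i}^{2}$ (resp. $\sum \ln d_{i}$) flips with the sign of $\eps$ because that term carries a factor $\eps $ rather than $|\eps|$.

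The main obstacle is getting the $C^{1}$ estimates uniform on compact subsets of $\Lambda $ rather than merely the $C^{0}$ expansion, and in particular handling the interaction terms carefully enough in the tower cases, where the concentration parameters have different orders of magnitude and $\delta _{i+1}/\delta _{i}\to 0$: one must expand $PU_{\delta _{i+1},z_{i+1}}$ near the center $z_{i}$ of the slower bubble and vice versa, and show that the mixed terms in $\int (\sum \lambda _{i}PU_{\delta _{i},z_{i}})^{p+1}$ beyond nearest-neighbour interactions are genuinely of lower order. A secondary technical point is the verification, case by case, of the inequality $2\eta >\gamma $ (equivalently, that the rates in $\mathfrak{M},\mathfrak{T}$ were chosen so the error from the reduction is dominated by the leading reduced term); this is the place where the admissible range of dimensions ($n\ge 5$ for $(\mathcal{BN})_\eps$, say) enters. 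Once these estimates are in place, \eqref{otto} follows by term-by-term collection, and differentiating the individual interaction and self-interaction estimates — each of which is classical and available in \cite{rey1,mupi4,mupi5,mupi2} — gives the $C^{1}$ conclusion.
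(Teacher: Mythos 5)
Your overall route is the same one the survey implicitly relies on (it gives no proof here, deferring to \cite{rey1,blr,mupi4,delfemu3,mupi5,mupi2,gmp}): first replace $\widetilde J_\eps(\mathbf z,\mathbf d)$ by $J_\eps(V_{\mathbf z,\mathbf d})$ up to a negligible remainder, then expand $J_\eps(V_{\mathbf z,\mathbf d})$ through the projection estimates $PU_{\delta,z}=U_{\delta,z}-\alpha_n\delta^{\frac{n-2}{2}}H(\cdot,z)+\dots$, the Green/Robin self- and cross-interactions, the term $\frac\eps2\int V^2$ for $(\mathcal{BN})_\eps$ and the $\eps\ln|V|$ expansion for $(\mathcal{AC})_\eps$, with nearest-neighbour interactions $(\delta_{i+1}/\delta_i)^{\frac{n-2}{2}}(1+|\sigma_i|^2)^{-\frac{n-2}{2}}$ in the tower cases. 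Two of your steps, however, have genuine gaps. The first is the claim ``one then checks that $2\eta>\gamma$'': Propositions \ref{err} and \ref{pro2} as stated only provide \emph{some} $\eta>0$ with no quantitative link to $\gamma$, so this is not something that can be checked from what you have; it must be proved by computing the sharp value of $\eta$ case by case (this is where borderline dimensions, logarithmic corrections, and the fact that in the tower cases the consecutive interactions are themselves of size $|\eps|^\gamma$ enter, and where some of the cited papers need finer, weighted, norms rather than the crude $H^1_0$ bound). Incidentally, since $\phi^\eps_{\mathbf z,\mathbf d}$ solves the first equation of \eqref{lj-sc} and lies in $K^\perp_{\mathbf z,\mathbf d}$, one has $J'_\eps(V+\phi)[\phi]=0$, so the remainder is in fact $O(\|\phi\|^2)$ with no $\|\mathfrak R_{\mathbf z,\mathbf d}\|\,\|\phi\|$ term; that is the estimate you actually want.

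The more serious gap is the $C^1$ part. It is true that $J'_\eps(V+\phi)$ vanishes on $K^\perp_{\mathbf z,\mathbf d}$, but $\partial_{(\mathbf z,\mathbf d)}\phi^\eps_{\mathbf z,\mathbf d}$ does \emph{not} belong to $K^\perp_{\mathbf z,\mathbf d}$, because the orthogonality constraints themselves move with $(\mathbf z,\mathbf d)$; so the $\phi$-derivative contributions do not simply ``drop out''. The standard repair is to use that, the first equation being solved, $J'_\eps(V+\phi)=\sum_{i,j}c_{ij}\,(P\psi^j_{\delta_i,z_i},\cdot)$, to differentiate the relations $(\phi,P\psi^j_{\delta_i,z_i})=0$ so that $(P\psi^j_{\delta_i,z_i},\partial\phi)=-(\phi,\partial P\psi^j_{\delta_i,z_i})$, and to prove smallness of the coefficients $c_{ij}$. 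Moreover, derivatives with respect to the unscaled points $z_i$ cost a factor $\delta_i^{-1}$ (for instance $\|\psi^j_{\delta_i,z_i}\|\sim\delta_i^{-1}$), so your assertion that $\partial_{(\mathbf z,\mathbf d)}\phi^\eps_{\mathbf z,\mathbf d}=O(|\eps|^\eta)$ is not correct as stated and, even after the appropriate rescaling, is not by itself enough to conclude that the differentiated remainder is $o(|\eps|^\gamma)$. This bookkeeping is precisely the nontrivial content of the $C^1$-uniform statement in the references, and your sketch currently assumes it rather than proves it.
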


\bigskip
Finally, we   reduce  the problem of finding blowing-up solutions to the problem \eqref{rep} to the problem of finding {\em good} critical points of the  function $\Psi,$ which is defined
on a finite dimensional space.

\begin{theorem}
\label{finale}
Assume $\(\mathbf{z}^*,\mathbf{d}^* \)\in\Lambda$ is a $C^1-$stable critical points of the function $\Psi$. Then if $\eps$ is small enough
there exists a solution $u_\eps$ to problem \eqref{rep} such that
$$\left\|u_\eps - V_ {\mathbf{z}_\eps,\mathbf{d}_\eps }\right\|\to 0\quad\hbox{as}\ \eps\to0$$
with
$$\(\mathbf{z}_\eps,\mathbf{d}_\eps \)\to \(\mathbf{z}^*,\mathbf{d}^*\)\quad\hbox{as}\ \eps\to0$$
\end{theorem}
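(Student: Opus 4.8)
The plan is to deduce Theorem~\ref{finale} from the finite-dimensional reduction of Proposition~\ref{pro3} together with the asymptotic expansion of Proposition~\ref{pro4-1}, everything localised near $(\mathbf{z}^*,\mathbf{d}^*)$ so that the preceding estimates apply with uniform constants. Since $(\mathbf{z}^*,\mathbf{d}^*)$ belongs to the open set $\Lambda$, the first thing I would do is fix a small closed ball $\mathbf{C}:=\overline{B}_{\rho_0}(\mathbf{z}^*,\mathbf{d}^*)$ which is a compact subset of $\Lambda$. On such a $\mathbf{C}$, Propositions~\ref{err}, \ref{pro1}, \ref{pro2} and \ref{pro4-1} hold with constants $\eps_0,c>0$ depending only on $\mathbf{C}$; in particular, for $\eps\in(0,\eps_0)$ there is a unique $\phi^\eps_{\mathbf{z},\mathbf{d}}\in K_{\mathbf{z},\mathbf{d}}^{\perp}$ solving the first equation of \eqref{lj-sc}, with $\|\phi^\eps_{\mathbf{z},\mathbf{d}}\|\le c|\eps|^{\eta}$, and $\widetilde{J}_\eps$ is defined and of class $C^1$ on $\mathrm{int}(\mathbf{C})$. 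By Proposition~\ref{pro3}, it then suffices to exhibit, for all small $\eps$, a critical point $(\mathbf{z}_\eps,\mathbf{d}_\eps)$ of $\widetilde{J}_\eps$ lying in $\mathrm{int}(\mathbf{C})$ with $(\mathbf{z}_\eps,\mathbf{d}_\eps)\to(\mathbf{z}^*,\mathbf{d}^*)$: indeed $u_\eps:=V_{\mathbf{z}_\eps,\mathbf{d}_\eps}+\phi^\eps_{\mathbf{z}_\eps,\mathbf{d}_\eps}$ then solves \eqref{rep} by Proposition~\ref{pro3}, and $\|u_\eps-V_{\mathbf{z}_\eps,\mathbf{d}_\eps}\|=\|\phi^\eps_{\mathbf{z}_\eps,\mathbf{d}_\eps}\|\le c|\eps|^{\eta}\to0$ by Proposition~\ref{pro2}, which is precisely the assertion.

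To produce that critical point I would rescale the reduced energy. Proposition~\ref{pro4-1} gives $\widetilde{J}_\eps=a(\eps)+b|\eps|^{\gamma}\Psi+o(|\eps|^{\gamma})$ in $C^1(\mathbf{C})$, so the normalised functionals
\begin{equation*}
\Psi_\eps:=\frac{1}{b|\eps|^{\gamma}}\bigl(\widetilde{J}_\eps-a(\eps)\bigr)
\end{equation*}
converge to $\Psi$ in $C^1(\mathbf{C})$ as $\eps\to0$; moreover, $a(\eps)$ being independent of $(\mathbf{z},\mathbf{d})$, the critical points of $\Psi_\eps$ coincide with those of $\widetilde{J}_\eps$. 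Now I invoke the hypothesis that $(\mathbf{z}^*,\mathbf{d}^*)$ is a \emph{$C^1$-stable} critical point of $\Psi$: this means precisely that there exist $\rho_1\in(0,\rho_0)$ and $\sigma>0$ such that every $g\in C^1(\mathbf{C})$ with $\|g-\Psi\|_{C^1(\mathbf{C})}<\sigma$ has a critical point in the open ball $B_{\rho_1}(\mathbf{z}^*,\mathbf{d}^*)$; equivalently, the local Brouwer degree $\deg\bigl(\nabla\Psi,B_{\rho_1}(\mathbf{z}^*,\mathbf{d}^*),0\bigr)$ is well defined and nonzero (nondegenerate critical points, strict local maxima or minima, and more generally critical points with nonvanishing local degree all qualify). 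Shrinking $\eps_0$ so that $\|\Psi_\eps-\Psi\|_{C^1(\mathbf{C})}<\sigma$ for $\eps\in(0,\eps_0)$, the stability yields a critical point $(\mathbf{z}_\eps,\mathbf{d}_\eps)\in B_{\rho_1}(\mathbf{z}^*,\mathbf{d}^*)\subset\mathrm{int}(\mathbf{C})$ of $\Psi_\eps$, hence of $\widetilde{J}_\eps$. Since $\rho_1$ can be taken arbitrarily small (with $\eps_0$ depending on it), letting $\eps\to0$ along a sequence forces $(\mathbf{z}_\eps,\mathbf{d}_\eps)\to(\mathbf{z}^*,\mathbf{d}^*)$.

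In degree-theoretic language the last step reads: for $\eps$ small the convergence $\nabla\Psi_\eps\to\nabla\Psi$ in $C^0(\mathbf{C})$ and the nonvanishing of $\nabla\Psi$ on $\partial B_{\rho_1}(\mathbf{z}^*,\mathbf{d}^*)$ guarantee that $\nabla\Psi_\eps$ has no zero on that sphere and that $\deg(\nabla\Psi_\eps,B_{\rho_1},0)=\deg(\nabla\Psi,B_{\rho_1},0)\neq0$ by homotopy invariance; hence $\nabla\Psi_\eps$ vanishes somewhere in $B_{\rho_1}$, which is the desired critical point. Together with the reduction described in the first paragraph this completes the proof.

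The genuinely hard analytic work — the error estimate of Proposition~\ref{err}, the uniform invertibility of the linearised operator in Proposition~\ref{pro1}, the contraction argument producing $\phi^\eps_{\mathbf{z},\mathbf{d}}$ with the sharp rate in Proposition~\ref{pro2}, and above all the $C^1$-uniform expansion of the reduced energy in Proposition~\ref{pro4-1} — has already been carried out, so the only real content of this last step is bookkeeping. The one point that requires care is the interplay between the $C^1$-convergence $\Psi_\eps\to\Psi$ and the notion of $C^1$-stable critical point: one must check that a single $\eps_0>0$ can be chosen so small that, simultaneously on the \emph{compact} set $\mathbf{C}\subset\Lambda$, the auxiliary function $\phi^\eps$ exists, the equivalence of Proposition~\ref{pro3} applies, and $\|\Psi_\eps-\Psi\|_{C^1(\mathbf{C})}$ lies below the stability threshold $\sigma$ of $(\mathbf{z}^*,\mathbf{d}^*)$; and one must ensure the critical point found lies in $\mathrm{int}(\mathbf{C})$ rather than on $\partial\mathbf{C}$, which is guaranteed by working inside the degree-nonvanishing ball $B_{\rho_1}$ strictly contained in $\mathbf{C}$. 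Apart from these routine precautions there is no further obstacle.
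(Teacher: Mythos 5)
Your proposal is correct and follows essentially the same route as the paper: the paper's proof of Theorem \ref{finale} is precisely the one-line observation that it ``follows immediately from the expansion \eqref{otto} and Proposition \ref{pro3}'', and your argument is the standard detailed implementation of that statement (localisation to a compact subset of $\Lambda$, $C^1$-convergence of the rescaled reduced energy, and persistence of the $C^1$-stable critical point via degree theory). No discrepancy or gap; you have simply written out the bookkeeping the paper leaves implicit.
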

\begin{proof}
It follows immediately from the expansion \eqref{otto} and Proposition \ref{pro3}.
\end{proof}

We remark that
$\(\mathbf{z}^*,\mathbf{d}^* \)\in\Lambda$ is a $C^1-$stable critical points of the function $\Psi$ if
\begin{itemize}
\item[(i)] $\(\mathbf{z}^*,\mathbf{d}^* \)\in\Lambda$ is an isolated minimum point of $\Psi,$
\item[(ii)] $\(\mathbf{z}^*,\mathbf{d}^* \)\in\Lambda$ is an isolated maximum point of $\Psi,$
\item[(iii)] $\(\mathbf{z}^*,\mathbf{d}^* \)\in\Lambda$ is a non degenerate critical  point of $\Psi.$
\item[(iv)] $\(\mathbf{z}^*,\mathbf{d}^* \)\in\Lambda$ is a critical point of   min-max  type      of $\Psi $ (according to the Definition given by Del Pino-Felmer-Musso in \cite{delfemu3})

\end{itemize}
 
\section{Examples}
\subsection{Multi-bubbles for the Brezis-Nirenberg problem $(\mathcal{BN})_\eps$ and  the ''slightly sub-critical'' problem $(\mathcal{AC})_\eps$ when $\eps$ is positive}

According to Theorem \ref{finale}, solutions to these problems are generated by  $C^1-$stable critical points of the function $\Psi$ defined in \eqref{12} and \eqref{34}
when $\eps>0.$

\begin{itemize}
\item[$\bullet$] $\kappa=1$

The function $\Psi$ has a     minimum point  $\Rightarrow$  if  $\eps\sim0^+$  problems $(\mathcal{BN})_\eps$ and $(\mathcal{AC})_\eps$  have
a positive solution with one blow-up point.
$$ $$
\item[$\bullet$] $\kappa=2$, $\lambda_1=\lambda_2=+1,$   $\Omega$ {\em is a dumb-bell with a thin handle}

The function $\Psi$ has a     minimum point  $\Rightarrow$   if  $\eps\sim0^+$  problems $(\mathcal{BN})_\eps$ and $(\mathcal{AC})_\eps$  have
a positive solution with two different blow-up points.
$$ $$
\item[$\bullet$] $\kappa\ge2$, $\lambda_1=\dots=\lambda_\kappa=+1,$   $\Omega$ {\em is convex}

The function $\Psi$ does not have any critical      points  $\Rightarrow$   if  $\eps\sim0^+$  problems $(\mathcal{BN})_\eps$ and $(\mathcal{AC})_\eps$ do not  have
a positive solution with $\kappa$ different blow-up points.
$$ $$
\item[$\bullet$] $\kappa=2$, $\lambda_1=+1,$ $ \lambda_2=-1$

The function $\Psi$ has a    minimum point and $(n-1)$ critical points of min-max type $\Rightarrow$ if  $\eps\sim0^+$  problems $(\mathcal{BN})_\eps$ and $(\mathcal{AC})_\eps$  have
$n$ sign changing solution with one positive and one negative blow-up points.
\end{itemize}

\subsection{Multi-bubbles for the Brezis-Nirenberg problem $(\mathcal{BN})_\eps$ and  the ''slightly super-critical'' problem $(\mathcal{AC})_\eps$ when $\eps$ is negative}
According to Theorem \ref{finale}, solutions to these problems are generated by  $C^1-$stable critical points of the function $\Psi$ defined in \eqref{12} and \eqref{34}
when $\eps<0.$
\begin{itemize}
\item[$\bullet$] $\kappa=1$

The function $\Psi$ does not have any critical      points  $\Rightarrow$  if  $\eps\sim0^-$  problems $(\mathcal{BN})_\eps$ and $(\mathcal{AC})_\eps$ do  not have
any positive solution with one blow point.
$$ $$
\item[$\bullet$] $\kappa=2$, $\lambda_1=\lambda_2=+1 $   or  $\kappa=3$, $\lambda_1=\lambda_2=\lambda_3=+1 ,$  $\Omega$ {\em has a hole}

The function $\Psi$ has a     critical  point of min-max type $\Rightarrow$   if  $\eps\sim0^-$  problems $(\mathcal{BN})_\eps$ and $(\mathcal{AC})_\eps$  have
a positive solution with two different positive blow-up points.
$$ $$
\item[$\bullet$] $\kappa=2$, $\lambda_1=+1,$ $ \lambda_2=-1$

The function $\Psi$  does not have any critical      points  $\Rightarrow$  if  $\eps\sim0^-$  problems $(\mathcal{BN})_\eps$ and $(\mathcal{AC})_\eps$ do  not have
any sign changing solution with one positive and one negative blow-up points.

\end{itemize}
 \subsection{Tower of bubbles with alternating sign for the ''almost critical'' problem $(\mathcal{AC})_\eps$ when $\eps$ is negative }
 According to Theorem \ref{finale}, solutions to this problem  are generated by  $C^1-$stable critical points of the function $\Psi$ defined in \eqref{5}.
 
\begin{itemize}
\item[$\bullet$] $\kappa\ge1$

The function $\Psi$ has a      critical  point of min-max type $\Rightarrow$   if  $\eps\sim0^+$  problems $(\mathcal{AC})_\eps$ has
a sign changing solution with $\kappa$ collapsing  blow-up points with alternating sign.
\end{itemize}

\subsection{Tower of bubbles with alternating sign for the Coron's problem $(\mathcal{ C})_\eps$  }
According to Theorem \ref{finale}, solutions to this problem  are generated by  $C^1-$stable critical points of the function $\Psi$ defined in \eqref{6}.

\begin{itemize}
\item[$\bullet$] $\kappa\ge1$

The function $\Psi$ has a   critical  point of min-max type   $\Rightarrow$   if  $\eps\sim0^+$  problems $(\mathcal{C})_\eps$ has
a sign changing solution with $\kappa$ collapsing  blow-up points with alternating sign.
\end{itemize}

 \end{document}